\documentclass[10pt,a4paper]{amsart}
\usepackage[cp1250]{inputenc}
\usepackage[OT4]{fontenc}
\usepackage{fullpage}
\usepackage{amsfonts}
\usepackage{hyperref}
\usepackage{amssymb}
\usepackage{graphicx}
\usepackage{color}
\usepackage{enumerate}

\newtheorem*{lem}{Lemma}

\newtheorem*{twm}{Main Theorem}
\newtheorem*{prop}{Proposition}

\theoremstyle{definition}
\newtheorem*{df}{Definition}

\def\C{\mathbb{C}}

\def\t{\widetilde}

\def\O{\mathcal{O}}

\def\Omega{\varOmega}
\def\Phi{\varPhi}
\def\Psi{\varPsi}
\def\epsilon{\varepsilon}

\begin{document}

\title{A note on singularity of fibers of singular sets}
\author[M. Zaj\k{e}cka]{Ma{\l}gorzata Zaj\k{e}cka}
\address{\textnormal{Jagiellonian University\newline
\indent Faculty of Mathematics and Computer Science\newline
\indent Institute of Mathematics\newline
\indent {\L}ojasiewicza 6\newline
\indent 30-348 Krak\'ow}}

\address{\textnormal{Cracow University of Technology\newline
\indent Faculty of Physics, Mathematics and Computer Science\newline
\indent Institute of Mathematics\newline
\indent Warszawska 24\newline
\indent 31-155 Krak\'ow\newline
$ $}}

\email{malgorzata.zajecka@gmail.com}
\keywords{}
\subjclass[2010]{}
\begin{abstract}
We present a general theorem on fibers of singular sets.
\end{abstract}
\maketitle

\section{Introduction and prerequisities}

In this paper we show an elementary proof of the following result.

\begin{twm}
Let $D_1$ be a connected Josefson manifold (i.e.~$D_1$ is a complex manifold
such that every locally pluripolar set in $D_1$ is globally pluripolar) of
dimension $n_1$ and let $D_2$ be a complex manifold of dimension $n_2$.
Let $\Omega\subset D_1\times D_2$ be a domain and let $M\subset\Omega$ be
a singular set with respect to the family
$\mathcal{F}\subset\O(\Omega\setminus M)$ such that the set $\{a_1\in D_1:\text{
the fiber }M_{(a_1,\cdot)}\text{ is not pluripolar}\}$ is pluripolar in $D_1$.
Then there exists a pluripolar set $Q\subset D_1$ such that for every $a_1\in
\pi_{D_1}(\Omega)\setminus Q$ the fiber $M_{(a_1,\cdot)}$ is singular in
$\Omega_{(a_1,\cdot)}$ with respect to the family
$\mathcal{F}_a:=\{f(a_1,\cdot):f\in\mathcal{F}\}\subset\mathcal{O}(\Omega_{(a_1,\cdot)}\setminus
M_{(a_1,\cdot)})$, where $\pi_{D_1}(\Omega)$ denotes the projection of $\Omega$
to $D_1$ and for $B\subset D_1\times D_2$ and $a_1\in D_1$ we put
$B_{(a_1,\cdot)}:=\{a_2\in D_2: (a_1,a_2)\in B\}$.
\end{twm}

Similar property of singular sets in Riemann domains was originally
proven in \cite{JarPfl 2006} (Theorem 2.2). A stronger version for
Riemann regions of holomorphy was published in \cite{JarPfl 2011} (Proposition
9.1.4).

\begin{prop}
Let $D_1$, $D_2$ be Riemann domains over $\C^{n_1}$ and $\C^{n_2}$,
respectively. Let $\Omega\subset D_1\times D_2$ be a Riemann region of holomorphy and let
$M\subset\Omega$ be a singular set with
respect to the family $\mathcal{F}\subset\O(\Omega\setminus M)$, such that the set $\{a_1\in D_1:\text{
the fiber }M_{(a_1,\cdot)}\text{ is not pluripolar}\}$ is pluripolar in $D_1$. Then there
exists a pluripolar set $Q\subset D_1$ such that for any $a_1\in
\pi_{D_1}(\Omega)\setminus Q$ the fiber $M_{(a_1,\cdot)}$ is singular with
respect to the family $\mathcal{F}_{a_1}=\{f(a_1,\cdot):f\in\mathcal{F}\}$.
\end{prop}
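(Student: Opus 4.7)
Here is the plan. Put $Q_0:=\{a_1\in D_1:M_{(a_1,\cdot)}\text{ is not pluripolar}\}$, pluripolar by hypothesis. Since $D_2$ is second countable, fix a countable base $\{V_j\}_{j\in\mathbb{N}}$ of open subsets of $D_2$ with each $\overline{V_j}$ compact. For each $j$ set
$$
A_j:=\{a_1\in\pi_{D_1}(\Omega):\overline{V_j}\subset\Omega_{(a_1,\cdot)},\ V_j\cap M_{(a_1,\cdot)}\neq\emptyset,\ \text{every }f(a_1,\cdot),\ f\in\mathcal{F},\ \text{extends holomorphically to }V_j\},
$$
and let $Q:=Q_0\cup\bigcup_j A_j$. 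The plan is to show each $A_j$ is pluripolar, so that $Q$ is pluripolar as a countable union of pluripolar sets in the Josefson manifold $D_1$, and then to verify that this $Q$ does the job.

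The verification is direct. For $a_1\in\pi_{D_1}(\Omega)\setminus Q$, fix $a_2\in M_{(a_1,\cdot)}$ and any open $W\subset\Omega_{(a_1,\cdot)}$ containing $a_2$. By local compactness, pick $V_j$ from the base with $a_2\in V_j$ and $\overline{V_j}\subset W$. Then $\overline{V_j}\subset\Omega_{(a_1,\cdot)}$ and $a_2\in V_j\cap M_{(a_1,\cdot)}$, so $a_1\notin A_j$ forces some $f\in\mathcal{F}$ whose slice $f(a_1,\cdot)$ does not extend holomorphically to $V_j$, hence \emph{a fortiori} not to $W$. As $a_2\in M_{(a_1,\cdot)}$ was arbitrary, $M_{(a_1,\cdot)}$ is singular in $\Omega_{(a_1,\cdot)}$ with respect to $\mathcal{F}_{a_1}$.

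The main step is thus to show each $A_j$ is pluripolar. Suppose for contradiction some $A_j$ is not. The set $\Omega^{(j)}:=\{a_1\in D_1:\overline{V_j}\subset\Omega_{(a_1,\cdot)}\}$ is open in $D_1$ by the tube lemma and contains $A_j$. Covering $\Omega^{(j)}$ by coordinate polydiscs, the Josefson property (locally pluripolar $\Rightarrow$ globally pluripolar) furnishes a polydisc $U_1\subset\Omega^{(j)}$ with $A_j\cap U_1$ non-pluripolar in $U_1$. Then $U_1\times V_j\subset\Omega$. Pick $a_1^{*}\in A_j\cap U_1$ and $a_2^{*}\in V_j\cap M_{(a_1^{*},\cdot)}$, so $(a_1^{*},a_2^{*})\in M\cap(U_1\times V_j)$. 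A contradiction with the singularity of $M$ at $(a_1^{*},a_2^{*})$ follows as soon as every $f\in\mathcal{F}$ is extended holomorphically from $(U_1\times V_j)\setminus M$ to the open set $U_1\times V_j$.

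This last joint extension is the main technical obstacle. Fix $f\in\mathcal{F}$; for each $a_1\in A_j\cap U_1$ let $F_{a_1}\in\mathcal{O}(V_j)$ be the extension of $f(a_1,\cdot)$. I intend to glue the family $\{F_{a_1}\}$ to $f|_{(U_1\times V_j)\setminus M}$ by a Hartogs/Cauchy-integral argument: for each $a_2^{\circ}\in V_j$ choose a small polydisc $\Delta\ni a_2^{\circ}$ with $\overline{\Delta}\subset V_j$ and distinguished boundary $T$; using that $M_{(a_1,\cdot)}$ is pluripolar for every $a_1\in U_1\setminus Q_0$ (a co-pluripolar set in $U_1$), a generic translation of $T$ avoids $M_{(a_1,\cdot)}$ for $a_1$ in a non-pluripolar subset of $A_j\cap U_1$, and the Cauchy integral of $f$ over $\{a_1\}\times T$ both represents $F_{a_1}$ on $\Delta$ and, as a function of $(a_1,a_2)$, is holomorphic on a product neighbourhood in $U_1\times\Delta$. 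The delicate task---and the crux of the argument---is to patch these local Cauchy representations into a single holomorphic extension on all of $U_1\times V_j$ and to check it agrees with $f$ off $M$; this is exactly where the combination of generically pluripolar fibres, a non-pluripolar parameter set, and the Josefson property has to be used carefully.
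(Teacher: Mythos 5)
Your reduction is sound and is essentially the standard one: with $Q_0$ the set of parameters with non-pluripolar fibre and $A_j$ indexed by a countable base of $D_2$, the set $Q=Q_0\cup\bigcup_jA_j$ does the job once each $A_j$ is shown to be pluripolar, and your verification of that implication (pass from an arbitrary neighbourhood $W$ to a basic $V_j\subset W$) is correct. Note, however, that the paper itself does not prove this Proposition at all: it defers to Proposition 9.1.4 of \cite{JarPfl 2011} and merely remarks that the original argument survives the weakening of the hypothesis on $M$, so the comparison here is really with that cited proof.

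The proposal has a genuine gap exactly at its self-declared crux: the claim that each $A_j$ is pluripolar is never established. The Cauchy-integral gluing you sketch does not go through as described. The integral $\int_{T}f(a_1,w)(w-a_2)^{-1}\,dw$ defines a function holomorphic in $(a_1,a_2)$ only when $f$ is holomorphic on a neighbourhood of $U_1'\times T$ for some open $U_1'$, i.e.\ when $M$ misses an open tube around the torus. Pluripolarity of the individual fibres $M_{(a_1,\cdot)}$ lets you move $T$ off $M_{(a_1,\cdot)}$ for each fixed $a_1$ separately, but it gives no single torus avoiding $M$ over an open set of parameters, and on a merely non-pluripolar (non-open) parameter set the integral carries no holomorphy in $a_1$ at all. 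What is actually required is an extension theorem for the generalized cross $\bigl((A_j\cap U_1)\times V_j\bigr)\cup\bigl((U_1\times V_j)\setminus M\bigr)$, with $A_j\cap U_1$ non-pluripolar and $M$ a singular set with pluripolar fibres --- a cross theorem with pluripolar singularities of Rothstein/Terada type. That is the genuine content of the statement and precisely what is packed into the cited Proposition 9.1.4; invoking it here without proof would make the argument circular in spirit. A further warning sign is that your argument never uses the hypothesis that $\Omega$ is a Riemann region of holomorphy; that assumption is needed in the extension step you leave open, so an argument that never touches it cannot be complete.
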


Above proposition with slightly stronger assumption that $M$ is a relatively
closed pluripolar set in $D_1$ is exactly Proposition 9.1.4 from \cite{JarPfl
2011}. To see that it remains true in our case, the Reader is asked to follow the original proof.

Because Riemann domains are special case of Josefson manifolds, the
natural question was whether the property holds for singular sets in arbitrary
complex manifolds.

The original version of this result was one of the key properties used in the
proof of strong theorems concerning extensions of functions separately
holomorphic on different kinds of generalized crosses with singularities on
Riemann domains - see \cite{JarPfl 2007}, \cite{JarPfl 2011}, \cite{Zaj}.

Now, recall
the definition of singular set.
For an $n$-dimensional complex manifold $X$ let $M$ be a closed subset of $X$ such that for any domain
$\Omega\subset X$ the set $\Omega\setminus M$ is connected and dense in
$\Omega$ (for instance, let $M$ be a pluripolar set). Let $\mathcal{F}$ be a
family of functions holomorphic on $X\setminus M$.

\begin{df}\label{def_zb_sing}
A point $a\in M$ is called \emph{singular with respect to the family
$\mathcal{F}$}, if for any open connected neighborhood
$U_a$ of the point $a$ there exists a function $f\in\mathcal{F}$, that does not
extend holomorphically to $U_a$.

We call $M$ \emph{singular with respect to the family $\mathcal{F}$}, if every
point $a\in M$ is singular with respect to $\mathcal{F}$.
\end{df}

We have the following property.

\begin{lem}
Let $X_1$, $X_2$ be complex manifolds and let $\Phi:X_1\to X_2$ be a
biholomorphism. Let $M\subset X_1$ be a singular set with respect to the family $\mathcal{F}\subset\O(X_1\setminus M)$. Then the
set $N=\Phi(M)\subset X_2$ is singular with respect to the family
$\t{\mathcal{F}}:=\{f\circ\Phi^{-1}:f\in\mathcal{F}\}\subset\O(X_2\setminus N)$.
\end{lem}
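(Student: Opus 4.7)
The plan is to transfer everything through the biholomorphism $\Phi$, which is essentially a change of coordinates. The proof has two parts: first verifying that $N=\Phi(M)$ satisfies the topological prerequisites to be a candidate singular set in $X_2$, and then matching up local extensions via composition with $\Phi$.

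First I would check the preliminary properties: $N$ is closed in $X_2$ because $\Phi$ is a homeomorphism, and for any domain $V\subset X_2$ the preimage $U:=\Phi^{-1}(V)$ is a domain in $X_1$, so $U\setminus M$ is connected and dense in $U$; applying the homeomorphism $\Phi$ gives that $V\setminus N=\Phi(U\setminus M)$ is connected and dense in $V$. This makes the notion of singularity with respect to $\t{\mathcal{F}}$ meaningful, and shows $\t{\mathcal{F}}\subset\O(X_2\setminus N)$ since $f\circ\Phi^{-1}$ is a composition of a holomorphic map with a biholomorphism defined off $N$.

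Next I would verify singularity pointwise. Fix $b\in N$ and an arbitrary open connected neighborhood $V_b$ of $b$. Let $a:=\Phi^{-1}(b)\in M$ and $U_a:=\Phi^{-1}(V_b)$, which is an open connected neighborhood of $a$. By the singularity of $M$ with respect to $\mathcal{F}$, there exists $f\in\mathcal{F}$ which does not extend holomorphically to $U_a$. I claim $\t{f}:=f\circ\Phi^{-1}\in\t{\mathcal{F}}$ does not extend holomorphically to $V_b$. Indeed, suppose to the contrary that some $\t{g}\in\O(V_b)$ agrees with $\t{f}$ on $V_b\setminus N$; then $g:=\t{g}\circ\Phi\in\O(U_a)$ agrees with $f$ on $U_a\setminus M$, which contradicts the choice of $f$.

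There is no serious obstacle here; the only thing to be a bit careful about is that the domains in the definition of singularity are taken to be connected, so one must use that $\Phi^{-1}(V_b)$ remains connected (immediate, since $\Phi^{-1}$ is a homeomorphism) and that the extension transfers both ways under composition with $\Phi$, which uses nothing beyond $\Phi$ being a biholomorphism. The argument is symmetric in $\Phi$ and $\Phi^{-1}$, so one in fact obtains the equivalence of singularity, although only one direction is stated.
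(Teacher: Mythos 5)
Your proof is correct and takes essentially the same route as the paper's: the key step in both is transferring a putative holomorphic extension through $\Phi$ by composition, so that an extension of $f\circ\Phi^{-1}$ on $V_b$ would yield an extension of $f$ on $\Phi^{-1}(V_b)$. The only cosmetic differences are that you argue directly (exhibiting a non-extending $\widetilde{f}$ for each connected neighborhood) where the paper phrases the same argument as a contradiction, and that you spell out the closedness and connectedness/density of $V\setminus N$, which the paper merely asserts.
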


\begin{proof}
From the definition, $N$ is a closed subset of $X_2$ such that for any domain
$\Omega\subset X_2$ the set $\Omega\setminus N$ is connected and dense in
$\Omega$. We show that for any $a\in N$ and for any open connected neighborhood
$U_a$ of the point $a$ there exists a function $\t{f}\in\t{\mathcal{F}}$,
that does not extend holomorphically to $U_a$.

Fix an $a\in N$ and define $b:=\Phi^{-1}(a)\in M$. Assume that there exists an
open neighborhood $U_a$ of $a$ such that every function
$\t{f}\in\t{\mathcal{F}}$ extends holomorphically on $U_a$.
Let $V_b:=\Phi^{-1}(U_a)\subset X_1$. Fix $f\in \mathcal{F}$
and define $\t{f}:=f\circ \Phi^{-1}\in\t{\mathcal{F}}$. Then, from our
assumption, $\t{f}$ extends to a function $\t{F}$ holomorphic on $U_a$. Define
$F:=\t{F}\circ\Phi|_{V_b}\in\O(V_b)$. Since $F=\t{F}\circ\Phi=\t{f}\circ\Phi=f$
on the nonempty open set $V_b\setminus M$, we conclude that $F$ is a holomorphic
extension of $f$ to $V_b$ - a contradiction.
\end{proof}

\section{Proof of Main Theorem}



Fix $a=(a_1,a_2)\in M$, where $a_1\in D_1$, $a_2\in D_2$. Let
$\Phi_j:U_j\to\t{U}_j$ be a biholomorphic mapping such that $U_j$ is
and open neighbourhood of $a_j$, $\t{U}_j$ is an Euclidean ball in $\C^{n_j}$,
$\Phi_j(a_j)=0$, $j=1,2$, and $U_a:=U_1\times U_2\subset\Omega$.

Define $\Phi:=(\Phi_1,\Phi_2)$ and $N:=\Phi(M\cap U_a)$,
$\mathcal{F}_a:=\{f|_{U_a}:
f\in\mathcal{F}\}$, $\t{\mathcal{F}}_a:=\{f\circ \Phi^{-1}:
f\in\mathcal{F}_a\}$. Then $\t{\mathcal{F}}_a\subset
\O(\t{U}\setminus N)$ and, from Lemma, $N$ is singular with respect to the family $\t{\mathcal{F}}_a$.
Moreover, the set $\{z_1\in \t{U}_1:\text{ the fiber }N_{(z_1,\cdot)}\text{ is
not pluripolar}\}$ is pluripolar in $\t{U}_1$.

Now, from Proposition there
exists a pluripolar set $\t{Q}_a\subset\C^{n_1}$ such that for any
$w_1\in\pi_{\C^{n_1}}(\t{U}_z)\setminus\t{Q}_a$ the fiber
$N_{(w_1,\cdot)}=\{w_2\in\C^{n_2}:(w_1,w_2)\in N\}$ is singular with respect to
the family $\t{\mathcal{F}}_{w_1}:=\{g(w_1,\cdot):g\in\t{\mathcal{F}}_a\}$.
Define $b_1:=(\Phi_1)^{-1}(w_1)\in\pi_{D_1}(U_a)$,
$Q_a:=(\Phi_1)^{-1}(\t{Q}_a)$.
Then $Q_a$ is pluripolar in $D$ and
$$
N_{(w_1,\cdot)}=\{w_2\in\C^k:\exists b_2\in
U_2:\Phi_2(b_2)=w_2, (b_1,b_2)\in M\}=\Phi_2(M_{(b_1,\cdot)}).
$$
Thus, from Lemma, for any
$b_1\in\pi_{D_1}(U_a)\setminus Q_a$ the fiber $M_{(b_1,\cdot)}$ is singular with
respect to the family $\mathcal{F}_{b_1}:=\{f(b_1,\cdot):f\in\mathcal{F}_a\}$.


From $\{U_a\}_{a\in M}$ we select a countable covering
$\{U_{a_j}\}_{j=1}^\infty$ of the set $M$. Define $Q:=\bigcup_{j=1}^\infty Q_{a_j}\cup \{b_1\in D_1:
M_{(b_1,\cdot)}\text{ is not pluripolar}\}$. Because $D$ is a Josefson manifold, $Q$ is pluripolar in $D$. We show that for any $b_1\in\pi_{D_1}(\Omega)\setminus Q$ the fiber $M_{(b_1,\cdot)}$ is singular with respect to the family
$\mathcal{F}_{b_1}:=\{f(b_1,\cdot):f\in\mathcal{F}\}$.

Fix $b_1\in\pi_{D_1}(\Omega)\setminus Q$, $b_2\in M_{(b_1,\cdot)}$. Assume that
there exists an open neighbourhood $V_{b_2}$ of $b_2$
such that any function $f(b_1,\cdot)$, $f\in\mathcal{F}$, extends
holomorphically on $V_{b_2}$.
Because $(b_1,b_2)\in M$, then there exist $a_j=(a_{j1},a_{j2})$,
$j\in\{1,2,\ldots\}$, such that $(b_1,b_2)\in U_{a_j}=U_{j1}\times U_{j2}$,
where $U_{jk}$ is and open neighbourhood of $a_{jk}$, $k=1,2$. Thus $b_1\in
U_{j1}\setminus Q_{a_j}$ and the functions $f|_{U_{a_j}}(b_1,\cdot)$,
$f\in\mathcal{F}$, extend holomorphically on $V_{b_2}\cap U_{j2}$ - a
contradiction.

\end{document}